\newcommand{\C}{\mathbb{C}}
\newcommand{\Z}{\mathbb{Z}}
\newcommand{\F}{\mathbb{F}}
\newcommand{\SL}{\mathrm{SL}}
\newcommand{\noin}{\noindent}
\theoremstyle{definition}
\newtheorem{thm}{Theorem}
\newtheorem{lem}[thm]{Lemma}
\begin{document}
\title{On bounded elementary generation for $\SL_n$\\ over polynomial rings}
\author{Bogdan Nica}
\date{January 29, 2017}
\subjclass[2010]{11F06, 20H05, 20H25, 15A54}
\keywords{ Bounded generation, finite width, elementary matrix, special linear group, polynomial ring over a finite field.}
\address{\newline Department of Mathematics and Statistics \newline McGill University, Montreal}

\begin{abstract}
Let $\F[X]$ be the polynomial ring over a finite field $\F$. It is shown that, for $n\geq 3$, the special linear group $\SL_n(\F[X])$ is boundedly generated by the elementary matrices.
\end{abstract}

\maketitle
\section{Introduction}

The special linear group $\SL_n(\Z)$ is generated by the elementary matrices, that is, matrices which differ from the identity by at most one non-zero off-diagonal entry. Far more remarkable is the following fact: 

\begin{thm}[Carter - Keller \cite{CK}]\label{thm: ck}
Let $n\geq 3$. Then $\SL_n(\Z)$ is boundedly generated by the elementary matrices.
\end{thm}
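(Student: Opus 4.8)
The plan is to separate a soft, stable-rank reduction from an arithmetic core resting on Dirichlet's theorem on primes in arithmetic progressions. For the reduction: since $\Z$ has Bass stable rank $2$, an arbitrary $g \in \SL_n(\Z)$ with $n \geq 3$ can be brought into the form $\varepsilon \cdot \mathrm{diag}(h, I_{n-2})$, where $\varepsilon$ is a product of a bounded number of elementary matrices (the bound depending only on $n$) and $h \in \SL_2(\Z)$: one clears the first column to the first column of a block matrix $\mathrm{diag}(h_1, I_{n-2})$ by eliminating coordinates one at a time using the stable-rank reducibility of unimodular vectors, then clears the first row, then recurses down to the top-left $2 \times 2$ corner. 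Thus the theorem reduces to the single statement that there is a constant $C$ for which every $\mathrm{diag}(h, I_{n-2})$ with $h \in \SL_2(\Z)$ is a product of at most $C$ elementary matrices of $\SL_3(\Z) \subseteq \SL_n(\Z)$ --- equivalently, that the Mennicke symbols of $\Z$ are trivial \emph{with bounded complexity}. (That they are trivial at all is just $\SL_3(\Z) = E_3(\Z)$; the whole difficulty is uniformity.) Here $n \geq 3$ is essential, since $\SL_2(\Z)$, being virtually free, is \emph{not} boundedly generated by its elementary matrices --- one genuinely needs the extra coordinate as working room.

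For the core statement I would start from $h = \left(\begin{smallmatrix} \alpha & \beta \\ \gamma & \delta \end{smallmatrix}\right)$ and use a few elementary operations in $\SL_3(\Z)$, together with the Chinese Remainder Theorem, to reduce to the case $\gcd(\alpha, \gamma) = 1$ with $\gamma \neq 0$; then Dirichlet's theorem supplies a rational prime $p \equiv \alpha \pmod{\gamma}$, and a single operation replaces the top-left entry by $p$. The reason for forcing a prime into that slot is that $\Z/p$ is a \emph{field}: its unit group is cyclic of order $p-1$, and one also has the elementary identity presenting $\mathrm{diag}(u, u^{-1})$ as a product of a fixed number of elementary matrices for every unit $u$. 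Combining the field structure of $\Z/p$ with this identity, one drives the top-left entry of the adjusted matrix down to $1$, after which a bounded number of further operations finishes.

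The step hidden in that last sentence is where essentially all the work lies, and it is the one I expect to be the main obstacle. The naive route from a coprime pair $(p, \gamma)$ to $(1, 0)$ is the Euclidean algorithm, whose length is of order $\log p$ and hence useless for bounded generation; it must be bypassed outright. Carter and Keller's device is to apply Dirichlet a second time --- also replacing $\gamma$ by a prime $q \equiv \gamma \pmod{p}$ --- and then to exploit the arithmetic of the finite fields $\Z/p$ and $\Z/q$, in particular quadratic-reciprocity-type relations, to realize the transformation as a product of a \emph{fixed} number of elementary matrices in $\SL_3$, the spare coordinate being used throughout. Over a ring with an infinite unit group this final maneuver is comparatively easy (in the spirit of Cooke--Weinberger, at the cost of only needing the ring suitably localized); but $\Z$ has unit group merely $\{\pm 1\}$, and forcing the argument through with no units to spare is what constitutes the genuine content of the theorem.
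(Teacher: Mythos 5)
Your overall architecture --- a stable-range reduction of $g\in\SL_n(\Z)$ to a framed $\SL_2$ block, followed by Dirichlet's theorem to place a prime in a convenient slot --- matches the Carter--Keller outline and the first two steps of the argument in this paper. But the entire content of the theorem sits in the step you leave as a black box: reducing a framed $\SL_2$ matrix with prime entries to the identity in a number of elementary moves independent of the matrix. Your justification for that step --- ``$\Z/p$ is a field with cyclic unit group, and $\mathrm{diag}(u,u^{-1})$ is a bounded product of elementary matrices for every unit $u$'' --- conflates units of $\Z/p$ with units of $\Z$. The Whitehead-type identity expressing $\mathrm{diag}(u,u^{-1})$ by a fixed number of elementary matrices is available only when $u$ is a unit of the ring itself, i.e.\ $u=\pm 1$; an entry that is merely invertible modulo $p$ cannot be driven to $1$ by boundedly many elementary operations over $\Z$ without a genuinely new mechanism, and the Euclidean algorithm you rightly discard is exactly what that mechanism must replace. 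Invoking ``quadratic-reciprocity-type relations'' and a second application of Dirichlet names the difficulty without resolving it: no power lemma, no Mennicke-symbol identity, and no explicit bounded factorization is produced, so the proof has a real gap precisely at the point you yourself identify as the main obstacle.

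For comparison, the device that fills this gap in the present paper (carried out over $\F[X]$, and noted in the closing remarks to work over $\Z$ as well, with a better bound than Carter--Keller's) is concrete: after Dirichlet places primes $b,c$ on the anti-diagonal subject to an extra coprimality condition, one writes the matrix as $XY^{-1}$ where $X$ and $Y$ are powers of it with coprime exponents; the Cayley--Hamilton theorem shows that, modulo the anti-diagonal prime, each power has diagonal entries congruent to honest units of the base ring; and the swindle of Lemma~\ref{lem: s} transfers the prime factor across the diagonal so that each factor reduces, in a fixed number of moves, to a diagonal matrix $D(u)$ with $u$ a genuine unit, after which the two factors cancel. Carter and Keller achieve the same end with their power lemma for Mennicke symbols, which is where the residue-symbol arithmetic you allude to actually enters, together with extra care because $\Z^*=\{\pm1\}$ (for instance, $p-1$ and $q-1$ are never coprime, so the exponent bookkeeping is more delicate than over $\F[X]$). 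Your sketch would need to incorporate some such device --- either the power lemma or a swindle of this kind, with the exponents chosen and the bound counted --- before it constitutes a proof.
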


This means that, for some positive integer $\nu_n$, every matrix in $\SL_n(\Z)$ is a product of at most $\nu_n$ elementary matrices. The Carter - Keller theorem provides, in fact, the explicit bound $\nu_n=\tfrac{1}{2}(3n^2-n)+36$. See \cite{AM} for a variation on the Carter - Keller argument, with a slightly worse bound. In \cite{CK2}, Carter and Keller extend their argument to rings of integers in algebraic number fields.

A different approach to bounded elementary generation for $\SL_n$ over rings of integers, based on unpublished work of Carter, Keller, and Paige, can be found in \cite{WM}. The novelty is the use of model-theoretic ideas. Unlike the original Carter - Keller approach, this is a non-explicit argument. One proves the \emph{existence} of a bound on the number of elementary matrices needed to express a matrix in $\SL_n$.

Elementary generation of $\SL_n$ also holds for polynomial rings over fields. However, bounded elementary generation may fail. In \cite{K} van der Kallen shows, by means of algebraic K-theory, that $\SL_n(\C[X])$, $n\geq 2$, is not boundedly generated by the elementary matrices. From an arithmetical viewpoint, however, the closest relative of $\Z$ is a polynomial ring over a \emph{finite} field. The purpose of this note is to show the following analogue of Theorem~\ref{thm: ck}, which appears to be new (cf., e.g., \cite[p.523]{S}).

\begin{thm}\label{thm: F}
Let $\F$ be a finite field, and let $n\geq 3$. Then $\SL_n(\F[X])$ is boundedly generated by the elementary matrices.
\end{thm}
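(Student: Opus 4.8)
\bigskip
\noindent\emph{Proof proposal.}\;
The plan is to transport the Carter--Keller--Paige argument --- in the model-theoretic form of \cite{WM} --- from rings of integers to $\F[X]$. The only genuinely new arithmetic ingredient required is the function field analogue of Dirichlet's theorem on primes in arithmetic progressions, due to Kornblum and Artin: if $a,m\in\F[X]$ with $\gcd(a,m)=1$ and $\deg m\geq 1$, then there is an irreducible $p\equiv a\pmod{m}$. This is precisely the feature that $\C[X]$ lacks (compare \cite{K}), which is why $\F$ must be finite; the role of the hypothesis $n\geq 3$ is, as in Carter--Keller, to make available both the stable-range reduction below and the three-dimensional commutator calculus.

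\noindent\emph{Step 1: reduction to $\SL_2$-blocks in $\SL_3$.} This part uses no arithmetic, only that $\F[X]$ is a principal ideal domain of Krull dimension one, hence of Bass stable range $2$. For $n\geq 4$ one shortens a unimodular column over $\F[X]$ by elementary row operations, reducing its length by one; and a unimodular column $(a,b,c)$ in $\F[X]^3$ is carried to $e_1$ by a bounded number of operations via a direct manipulation: using $\gcd(a,b,c)=1$ and the Chinese remainder theorem one first adjusts $a$, by one elementary operation along the $b$-- and $c$--directions, to be coprime to $c$; then $(a)+(c)=\F[X]$ allows one to set the middle entry to $1$, and the column collapses. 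Clearing first columns and first rows in turn therefore carries an arbitrary $A\in\SL_n(\F[X])$, by a number of elementary matrices bounded in terms of $n$ (one may take $O(n^{3})$), to a block matrix $\mathrm{diag}(I_{n-3},\mathrm{diag}(g,1))$ with $g\in\SL_2(\F[X])$. Hence it suffices to show: there is $N$ such that $\mathrm{diag}(g,1)\in\SL_3(\F[X])$ is a product of at most $N$ elementary matrices, for every $g\in\SL_2(\F[X])$.

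\noindent\emph{Step 2: passing to a finite residue field.} Write $g=\bigl(\begin{smallmatrix} a & b\\ c & d\end{smallmatrix}\bigr)$; then $\gcd(a,c)=1$. If $c$ is zero or a unit, $g$ is already a product of boundedly many elementary matrices by Whitehead's identities, so assume $\deg c\geq 1$. Adding a suitable multiple of the second row to the first replaces $a$ by an irreducible $p\equiv a\pmod{c}$, by the Kornblum--Artin theorem; this costs one elementary matrix, and $\gcd(p,c)=1$ persists because $\det g=1$. Now $\F[X]/(p)$ is a finite field, over which $\SL_2$ is boundedly generated by elementary matrices with a bound depending only on the size $2$ (Gauss--Jordan elimination, uniformly in the field). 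Lifting such an expression factor by factor gives $g=(\text{at most }M\text{ elementary matrices})\cdot g'$ with $g'\equiv I\pmod{p}$, i.e.\ $g'\in\SL_2\bigl(\F[X],(p)\bigr)$. It therefore remains to bound, \emph{independently of the irreducible $p$}, the number of elementary matrices of $\SL_3(\F[X])$ needed to express $\mathrm{diag}(g',1)$ for $g'\in\SL_2\bigl(\F[X],(p)\bigr)$.

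\noindent\emph{Step 3: the uniform bound at prime level --- the main obstacle.} This last uniformity is the delicate heart of the Carter--Keller--Paige argument, and the step I expect to be genuinely hard. The matrix $\mathrm{diag}(g',1)$ lies in the relative elementary subgroup $E_3\bigl(\F[X],(p)\bigr)$ --- and the third coordinate is essential here, since $\SL_2\bigl(\F[X],(p)\bigr)$ is itself far from boundedly elementarily generated --- yet its elementary width must not grow as $\deg p\to\infty$. I would obtain this exactly as in \cite{WM}: reduce an arbitrary level to prime level by the standard Chinese-remainder splitting of relative subgroups, and then argue by compactness. If the widths were unbounded, an ultraproduct would turn $\F[X]$ into a nonstandard model carrying a ``nonstandard irreducible'' $\widehat{p}$ for which $\widehat{\F[X]}/(\widehat{p})$ is a pseudo-finite field of characteristic $\operatorname{char}\F$; over any field $\SL_3$ has bounded elementary width, and a transfer argument produces a contradiction. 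The points to verify are that $\F[X]$ --- a Dedekind domain with finite residue fields satisfying Dirichlet's theorem --- can replace $\Z$ throughout \cite{WM}, and that a single fixed characteristic causes no trouble; it does not, since none of the bounds in the argument depends on the characteristic, the elementary width of $\SL_3$ over a field being a function of $3$ alone. If an explicit $\nu_n$ were wanted one could instead imitate the Carter--Keller estimates \cite{CK} line by line, with Kornblum--Artin in place of Dirichlet --- presumably feasible but considerably more laborious; the model-theoretic route delivers the required existence statement cleanly.
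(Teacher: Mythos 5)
Your Steps 1 and 2 are fine, and they roughly parallel the opening of the paper's argument (stable-range reduction to a framed $\SL_2$ block, then Kornblum--Artin to introduce a prime). The genuine gap is Step 3, which is where the entire content of the theorem lies and which you do not prove: you defer it to ``exactly as in \cite{WM}'' plus a list of ``points to verify''. Moreover, the contradiction you sketch does not follow as stated. In the ultraproduct you would know that $\SL_3$ over the pseudo-finite residue field $\bigl(\prod_{\mathcal U}\F[X]\bigr)/(\widehat{p}\,)$ has bounded elementary width; what you would need is that $\mathrm{diag}(g',1)$, with $g'\equiv I$ modulo $\widehat{p}$, is a product of elementary matrices over the ultrapower ring itself. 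That ring is not a Dedekind domain (not even noetherian), so elementary generation of its $\SL_3$ --- let alone of the relative congruence subgroup of level $(\widehat{p})$ --- is not available for free; it is precisely the vanishing of a relative Mennicke-symbol/$SK_1$-type obstruction for rings satisfying suitable first-order axioms, and that is the hard algebraic core of Carter--Keller--Paige. To transport it to $\F[X]$ you would have to supply function-field versions of the arithmetic inputs beyond Dirichlet (power residue symbols and reciprocity, finiteness of the Bass--Milnor--Serre relative $SK_1$, the ideal-theoretic lemmas of \cite{WM}) and check they are first-order expressible; none of this is addressed, and you yourself flag the step as the one you expect to be genuinely hard. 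As written, the proposal is a plausible plan whose central step is both unexecuted and, in the form sketched, incorrect.

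For comparison, the paper's proof avoids Mennicke symbols and model theory altogether and is quite short: Kornblum--Artin (Theorem~\ref{thm: KA}) is used to make \emph{both} anti-diagonal entries primes $b,c$ of relatively prime degrees, so that the integers $\delta(b)=(q^{\deg b}-1)/(q-1)$ and $\delta(c)$ are coprime; one then writes the matrix as $g^{x\delta(b)}\cdot g^{-y\delta(c)}$, notes via Cayley--Hamilton that each such power equals $eI_2+fg$ with diagonal entries congruent to units of $\F$ modulo the relevant prime (since $a^{\delta(c)}\bmod c\in\F^*$, the residue field being a degree-$\deg(c)$ extension of $\F$), and finishes with the swindle Lemma~\ref{lem: s}, which moves the factor $f$ across the diagonal in a fixed number of elementary operations. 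This yields an explicit bound $\nu_n=\tfrac12(3n^2-n)+29$. If you wish to pursue the model-theoretic route instead, the task is to carry out the Mennicke-symbol analysis of \cite{WM} over $\F[X]$ in full; that is a substantial project, not a transfer by analogy.
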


The proof is an adaptation of the Carter - Keller argument. Here is one technical difference. A crucial role in \cite{CK}, and also in \cite{AM}, is played by a `power lemma' \cite[Lemma 1]{CK} whose origins lie in properties of the so-called Mennicke symbols. We use instead a simple `swindle lemma', Lemma~\ref{lem: s} herein. A version of this swindle was used in \cite[\S 2.3]{N}. The proof  of Theorem~\ref{thm: F} yields the explicit bound $\nu_n=\tfrac{1}{2}(3n^2-n)+29$. As this bound does not depend on the size of the finite field $\F$, it follows that Theorem~\ref{thm: F} holds, more generally, whenever $\F$ is an algebraic extension of a finite field.

One cannot take $n=2$ in Theorem~\ref{thm: F}: $\SL_2(\F[X])$ is not boundedly generated by the elementary matrices. This fact, and the reason behind it, are analogous to what happens for $\SL_2(\Z)$. The principal congruence subgroup of $\SL_2(\F[X])$ corresponding to the ideal $(X)$, in other words the kernel of the surjective homomorphism $\SL_2(\F[X])\to \SL_2(\F)$ given by the evaluation $X=0$, has a free product structure.

\section{Proof of Theorem~\ref{thm: F}}\label{sec: Z}
Throughout, an elementary operation will be called, simply, a move. We allow the degenerate move of multiplying by the identity matrix. We write $\sim$ for the equivalence relation of being connected by a finite number of moves. 

\subsection{Reduction to a framed $\SL_2$ matrix} \label{sec: one} The first step is to reduce a matrix in $\SL_n$, $n\geq 3$, to a matrix of the following form: 
\begin{align*}
\begin{pmatrix} a & b &\\ c & d &\\
& & I_{n-2}
\end{pmatrix}
\end{align*}
This is a standard reduction which works over any principal domain $A$. The general concept underpinning this procedure is Bass's stable range \cite{Ba}. For the sake of completeness, let us sketch the argument for $n=3$. Let $(u,v,w)$ be the last row of a matrix in $\SL_3(A)$. Thus, $u$, $v$, and $w$ are relatively prime, and we may assume that either $u$ or $v$ is non-zero. A suitable move takes us to a matrix whose last row is $(u',v',w)$, and such that $u'$ and $v'$ are relatively prime. The key fact here is that, if $\gcd (u,v,w)=1$ and $u$ is non-zero, then $\gcd(u,v+tw)=1$ for some $t\in A$. (An explicit choice for $t$ is the product of all primes dividing $u$ but not $v$. More precisely, we take one prime per associate class. We set $t=1$ if there are no such primes.) Now $w-1$ is a combination of $u'$ and $v'$, so two moves turn $w$ into $1$. Four additional moves clear the last row and the last column. In summary, we have reached a framed $\SL_2$ matrix, as desired, in $7$ moves. More generally, this argument reduces an $\SL_n$ matrix to a framed $\SL_2$ matrix in $\tfrac{1}{2}(3n^2-n)-5$ moves. 

The remainder of the argument is devoted to showing that $34$ moves are sufficient in order to reduce, in $\SL_3$, a framed $\SL_2$ matrix to the identity. For the purposes of the next step, we assume that $a\neq 0$; otherwise, the reduction is trivial and quick, in only $3$ moves.

\subsection{A convenient anti-diagonal} The second step will use the following analogue of Dirichlet's theorem on primes in arithmetic progressions. 

\begin{thm}[Kornblum - Artin]\label{thm: KA}
If $a,b\in \F[X]$ are relatively prime and $a\neq 0$, then there are infinitely many primes congruent to $b$ mod $a$. Furthermore, such a prime can have arbitrary degree, provided the degree is sufficiently large.
\end{thm}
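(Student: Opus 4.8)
The plan is to transcribe the classical $L$-function proof of Dirichlet's theorem to the ring $A=\F[X]$, where all the relevant Dirichlet series become rational functions of the variable $u=q^{-s}$, $q=|\F|$. Write $\Phi(a)=|(A/aA)^{\times}|$, and for each character $\chi$ of $(A/aA)^{\times}$ let $\chi$ also denote the induced function on $A$, with $\chi(f)=0$ whenever $\gcd(f,a)\neq 1$. Form
\begin{align*}
L(u,\chi)=\sum_{f\text{ monic}}\chi(f)\,u^{\deg f}=\prod_{P\text{ monic prime}}\bigl(1-\chi(P)u^{\deg P}\bigr)^{-1},
\end{align*}
the Euler product being valid for $|u|<1/q$. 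Two facts set up the argument. First, the monic polynomials of any fixed degree $n\geq\deg a$ are evenly distributed among the residue classes mod $a$, so $\sum_{\deg f=n}\chi(f)=0$ for every $n\geq\deg a$ when $\chi\neq\chi_{0}$; hence $L(u,\chi)$ is then a polynomial in $u$, of degree at most $\deg a-1$, with constant term $1$. Second, $L(u,\chi_{0})=(1-qu)^{-1}\prod_{P\mid a}(1-u^{\deg P})$ has a simple pole at $u=1/q$, and this pole is what carries the prime polynomial theorem $\pi_{q}(n)\sim q^{n}/n$.

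By orthogonality of characters, the number $\pi_{q}(n;a,b)$ of monic primes $P$ with $\deg P=n$ and $P\equiv b\bmod a$ equals, up to lower-order prime-power corrections, $\tfrac{1}{n\,\Phi(a)}\sum_{\chi}\overline{\chi(b)}\,[u^{n}]\bigl(u\tfrac{d}{du}\log L(u,\chi)\bigr)$, in which $\chi_{0}$ alone supplies the main term $q^{n}/n$. Controlling the contributions of the remaining characters requires, first, the non-vanishing $L(1/q,\chi)\neq 0$ for $\chi\neq\chi_{0}$ --- the exact analogue of $L(1,\chi)\neq 0$. For complex $\chi$ this is immediate: a zero there, together with one for $\overline{\chi}$, would put a zero of order $\geq 2$ at $u=1/q$ into the rational function $\prod_{\chi}L(u,\chi)=\prod_{P\nmid a}(1-u^{f_{P}\deg P})^{-\Phi(a)/f_{P}}$ (with $f_{P}$ the multiplicative order of $P$ mod $a$), overpowering the single simple pole carried by $\chi_{0}$; but that product has non-negative power-series coefficients and constant term $1$, so it is $\geq 1$ on $[0,1/q)$ and cannot vanish at $u=1/q$. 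For a real $\chi\neq\chi_{0}$ one instead observes that $L(u,\chi)/(1-qu)=\zeta_{A}(u)L(u,\chi)$, where $\zeta_{A}(u)=(1-qu)^{-1}$, has non-negative power-series coefficients dominating those of $\zeta_{A}(u^{2})=(1-qu^{2})^{-1}$, so it is not a polynomial; hence $1-qu\nmid L(u,\chi)$ and again $L(1/q,\chi)\neq 0$. These non-vanishings already give $\sum_{P\equiv b}u^{\deg P}\to\infty$ as $u\to 1/q^{-}$, hence infinitely many primes $P\equiv b\bmod a$.

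For the sharper claim --- a prime in the class $b$ of every sufficiently large prescribed degree --- write $L(u,\chi)=\prod_{j}(1-\alpha_{j}u)$ for $\chi\neq\chi_{0}$, so that $[u^{n}]\bigl(u\tfrac{d}{du}\log L(u,\chi)\bigr)=-\sum_{j}\alpha_{j}^{n}$ with at most $\deg a-1$ inverse roots $\alpha_{j}$. The Euler product gives only $|\alpha_{j}|\leq q$; what one actually needs is that $L(u,\chi)$ have no zero anywhere on the circle $|u|=1/q$, i.e. $|\alpha_{j}|<q$ for every $j$. Since the $\alpha_{j}$ are finite in number this forces $-\sum_{j}\alpha_{j}^{n}=o(q^{n})$, whence
\begin{align*}
\pi_{q}(n;a,b)=\frac{q^{n}}{n\,\Phi(a)}\bigl(1+o(1)\bigr),
\end{align*}
which is strictly positive for all $n$ beyond a bound depending only on $a$ and $b$. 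The main obstacle is exactly this edge non-vanishing on $|u|=1/q$: it is the function-field counterpart of $L(1+it,\chi)\neq 0$, and can be reached either by the classical $3$-$4$-$1$ positivity argument or, with room to spare, from the Riemann Hypothesis for curves over finite fields (Weil), which gives the far stronger $|\alpha_{j}|=\sqrt q$ and the explicit error term $O\!\bigl((\deg a)\,q^{n/2}/n\bigr)$. In brief, once the $L$-functions are set up the qualitative half of the theorem is elementary, and the clause about arbitrary large degree rests on this one further --- and by now routine --- input.
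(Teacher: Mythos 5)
The paper does not actually prove Theorem~\ref{thm: KA}: it is invoked as a classical result of Kornblum and Artin, with Rosen \cite{R} cited for a modern treatment. Your sketch is essentially that standard treatment --- polynomiality of $L(u,\chi)$ of degree at most $\deg a-1$ for $\chi\neq\chi_0$, the simple pole of $L(u,\chi_0)$ at $u=1/q$, non-vanishing at $u=1/q$ (positivity of $\prod_\chi L(u,\chi)$ for complex $\chi$, the Landau square-coefficient trick for real $\chi$), and zero-freeness on the whole circle $|u|=1/q$ to get a prime of every sufficiently large degree in the class --- and the architecture is correct.

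Two points should be tightened. First, under Weil's Riemann hypothesis the inverse roots of a nontrivial $L(u,\chi)$ have modulus $\sqrt q$ \emph{or} $1$ (trivial inverse roots of modulus $1$ occur, e.g.\ from imprimitivity or when $\chi$ is trivial on $\F^*$), so the correct statement is $|\alpha_j|\leq\sqrt q$ rather than $|\alpha_j|=\sqrt q$; this does not affect your error term or conclusion. Second, the claim that the ``classical 3-4-1 positivity argument'' yields non-vanishing on all of $|u|=1/q$ glosses over a function-field-specific exceptional point: because $u=q^{-s}$ is periodic in $s$, the point $u=-1/q$ plays for a \emph{real} character exactly the role that $u=1/q$ does (the auxiliary factor $L(\sigma+2it,\chi^2)$ acquires a pole there), so the 3-4-1 inequality alone does not exclude a zero at $u=-1/q$. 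It is excluded either by rerunning your Landau argument for the real completely multiplicative function $f\mapsto(-1)^{\deg f}\chi(f)$, whose $L$-series is $L(-u,\chi)$, or, as you note, by Weil's theorem, which the paper itself regards as available (\S 3.3). With that point made explicit, your proof is complete and matches the treatment in the cited reference.
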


The first part is due to Kornblum (1919). The second part is a sharpening due to Artin (1921). See \cite[Chapter 4]{R} for a modern treatment.

Consider a matrix
\begin{align*}
\begin{pmatrix} a & b\\ c & d
\end{pmatrix}\in \SL_2(\F[X]).
\end{align*}
As $a$ and $b$ are relatively prime, the first part of Theorem~\ref{thm: KA} ensures that there is a prime $b'\in \F[X]$ satisfying $b'\equiv b$ mod $a$. Similarly, there is a prime $c'\in \F[X]$ satisfying $c'\equiv c$ mod $a$. Thus
\begin{align*}
\begin{pmatrix} a & b\\ c & d 
\end{pmatrix} \;\sim\;
\begin{pmatrix} a & b'\\ c' & d' 
\end{pmatrix}
\end{align*}
in $2$ moves. Furthermore, we can assume that $b'$ and $c'$ have relatively prime degrees: once $b'$ has been chosen, we use the second part of Theorem~\ref{thm: KA} to pick $c'$ of suitable degree.

\subsection{The main step} Let
\begin{align*}
\begin{pmatrix} a & b\\ c & d
\end{pmatrix}\in \SL_2(\F[X])
\end{align*}
be a matrix enjoying the property granted by the previous step: the anti-diagonal entries $b$ and $c$ are prime, with relatively prime degrees.  

Let $q$ denote the number of elements in $\F$. Then the integers
\begin{align*}
\delta(b):=\frac{q^{\deg(b)}-1}{q-1}, \qquad \delta(c):=\frac{q^{\deg(c)}-1}{q-1}
\end{align*}
are relatively prime, as well. Let $x$ and $y$ be positive integers satisfying $x\delta(b)-y\delta(c)=1$. We write
\begin{align*}
\begin{pmatrix} a & b\\c & d 
\end{pmatrix}=XY^{-1},
\end{align*} 
where
\begin{align*}
X=\begin{pmatrix} a & b\\ c & d 
\end{pmatrix}^{x\delta(b)},\quad Y=\begin{pmatrix} a & b\\ c & d 
\end{pmatrix}^{y\delta(c)}.
\end{align*}
We aim to reduce $X$ and $Y$ independently in $\SL_3$. More precisely, we will show that
\begin{align*}\tag{$\dagger$}
\begin{pmatrix} Y & \\  & 1
\end{pmatrix}
\;\sim\;
\begin{pmatrix} D(u) & \\  & -1
\end{pmatrix}, \qquad 
D(u):=\begin{pmatrix} -u & \\ & u^{-1}
\end{pmatrix}
\end{align*}
in $14$ moves, for some $u\in \F^*$. The same will hold for $Y^{-1}$ in place of $Y$, and $u^{-1}$ in place of $u$, by inverting. It also holds for $X$ in place of $Y$, by interchanging $b$ and $c$, and then transposing, with respect to some other unit $v\in \F^*$. However, $D(v)\sim D(u)$ in $\SL_2$, in $4$ additional moves. We can then deduce that
\begin{align*}
\begin{pmatrix} XY^{-1} & \\  & 1
\end{pmatrix}=\begin{pmatrix} X & \\  & 1
\end{pmatrix}\begin{pmatrix} Y^{-1} & \\  & 1
\end{pmatrix}
\;\sim\;
\begin{pmatrix} D(u) & \\  & -1
\end{pmatrix}\begin{pmatrix} D(u^{-1}) & \\  & -1
\end{pmatrix}=I_3
\end{align*}
in $14+4+14=32$ moves. Along the way, we are using the fact that diagonal matrices normalize the elementary matrices. Taking into account the second step, we conclude that $34$ moves are sufficient in order to reduce a framed $\SL_2$ matrix to the identity.

Let us turn to proving $(\dagger)$. By the Cayley - Hamilton theorem, there are $e,f\in \F[X]$ such that:
\begin{align*}
\begin{pmatrix} a & b\\ c & d 
\end{pmatrix}^{y\delta(c)}=eI_2+f \begin{pmatrix} a & b\\ c & d 
\end{pmatrix} = \begin{pmatrix} e+fa & fb\\ fc & e+fd 
\end{pmatrix}
\end{align*}
Modulo $c$, the above matrices become upper triangular. So $e+fa\equiv a^{y\delta(c)}$ mod $c$. On the other hand, $a^{\delta(c)}$ mod $c$ is in $\F^*$. This follows by viewing the finite field $\F[X]/(c)$ as an extension of $\F$ of degree $\deg(c)$. Thus $e+fa\equiv u\in \F^*$ mod $c$. A similar argument applies to the lower diagonal entry. Keeping in mind that the determinant is $1$, we find that $e+fd\equiv u^{-1}\in \F^*$ mod $c$. At this point, we would like to replace the lower entry, $fc$, by $c$ so as to be able to perform reductions. 

These considerations motivate the following lemma. Roughly speaking, it provides a way of swindling factors across the diagonal.

\begin{lem}\label{lem: s}
Let $A$ be a principal domain, and let
\begin{align*}
\begin{pmatrix} a & b\\ sc & d 
\end{pmatrix}\in \SL_2(A)
\end{align*} 
where $a\equiv d$ mod $s$. Then
\begin{align*}
\begin{pmatrix} a & b &\\ sc & d &\\ & & 1
\end{pmatrix}
\;\sim\;
\begin{pmatrix} \pm a & -sb &\\ c & \mp d &\\ & & -1
\end{pmatrix}
\end{align*}
in $11$ moves.
\end{lem}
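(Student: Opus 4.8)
The plan is to realize the swindle by a sequence of elementary moves in $\SL_3$, using the extra row and column as scratch space. The congruence $a \equiv d \bmod s$ is precisely what allows the factor $s$ to be transferred from the lower-left entry to the upper-right entry. First I would write $a - d = se$ for some $e \in A$, and use this to make room. The starting matrix is
\begin{align*}
\begin{pmatrix} a & b & 0\\ sc & d & 0\\ 0 & 0 & 1\end{pmatrix}.
\end{align*}

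The key idea is to park the $1$ in the corner and use it as a pivot to move things around without the factor $s$. Concretely, I would proceed roughly as follows. Using column operations with the third column (which has a $1$ in the bottom slot), one can alter the first two columns by multiples of the vector $(0,0,1)^{T}$, and then using row operations with the third row one can feed multiples of the first two rows into it; composing such moves lets us effectively ``carry'' an entry through the third coordinate. More precisely, one cycles the bottom row $(sc, d, 1) \leadsto (sc, d, 1) \leadsto \dots$ so that after suitable moves the entry $sc$ is replaced by $c$ while the entry $b$ picks up a compensating factor $-s$, with the signs on the diagonal flipping as bookkeeping demands; the determinant constraint forces the pattern $\pm a, -sb, c, \mp d, -1$ in the target. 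The congruence $a \equiv d \bmod s$ is used at the step where one must add a multiple of $s$ times a row (or column) to reconcile the two diagonal entries after the factor has been shuttled across — without it, the relevant entry would not be a genuine element of $A$, and the move would be illegal.

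The main obstacle, and the part that requires genuine care rather than routine computation, is the bookkeeping: tracking exactly which $11$ elementary moves accomplish the transfer, in what order, and verifying that every intermediate entry lies in $A$ (this is where $a\equiv d \bmod s$ enters) and that the signs propagate to give exactly $\begin{pmatrix} \pm a & -sb & \\ c & \mp d & \\ & & -1\end{pmatrix}$ rather than some other sign pattern. I would organize the moves into three blocks: (i) three or four moves to load the relevant data into the third row/column using the corner $1$ as pivot; (ii) three or four moves that perform the actual swindle, i.e. clear $sc$ while creating $-sb$, exploiting that $a - d \in (s)$; (iii) the remaining moves to restore the third row and column to $(0,0,-1)$ shape and read off the result. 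A careful count of these blocks should total $11$, matching the claim, and the sign flip on the bottom-right entry from $1$ to $-1$ is the visible trace of the parity of the number of ``swaps'' performed.
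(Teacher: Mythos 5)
There is a genuine gap: your proposal is a plan for a proof rather than a proof. The entire content of the lemma is the explicit sequence of moves, and you defer exactly that (``the main obstacle\dots is the bookkeeping''), asserting only that ``a careful count of these blocks should total $11$.'' Nothing in the write-up verifies that any legal sequence of elementary operations produces the stated matrix, let alone in $11$ moves, so the claim is not established.

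Moreover, the mechanism you propose is not the one that makes the lemma work, and it is doubtful it can be pushed through as described. You use the hypothesis only in the form $a-d=se$ and hope to shuttle the single factor $s$ across the diagonal using the corner $1$ as pivot. The actual key point is different: since $ad-sbc=1$, the hypothesis gives $a^2\equiv ad\equiv 1 \bmod s$, so $s$ divides $(a-1)(a+1)$; using that $A$ is a principal domain one factors $s=s_1s_2$ with $s_1\mid a-1$ and $s_2\mid a+1$, i.e.\ $a=k_1s_1+1=k_2s_2-1$. The swindle is then performed in two stages, transferring $s_1$ and then $s_2$ across the diagonal; it is precisely the relations $a\mp1\in(s_i)$ that let one create and later clear the pivot entries in the third row and column, and the opposite signs in $a=k_1s_1+1$ versus $a=k_2s_2-1$ are what produce the sign pattern $\pm a,\ \mp d,\ -1$ in the target. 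Your sketch contains neither the derivation of $a^2\equiv 1\bmod s$ nor the factorization of $s$, and a one-shot transfer of $s$ using only $a-d\in(s)$ does not obviously terminate with the third row and column restored to $(0,0,-1)$ form. To repair the argument you would need to supply the factorization step and then exhibit the moves explicitly, checking at each stage that the entries lie in $A$ and that the count is $11$.
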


\begin{proof}
The degenerate case $s=0$ is easily seen to hold, so let us assume that $s\neq 0$. The hypotheses imply that $a^2\equiv ad\equiv 1$ mod $s$. So there are $s_1, s_2$ and $k_1,k_2$ in $A$ such that
\begin{align*}
s=s_1s_2, \qquad a=k_1s_1+1=k_2s_2-1.
\end{align*}
Now
\begin{align*}
\begin{pmatrix} a & b & 0\\ sc & d & 0\\ 0 & 0 & 1
\end{pmatrix}
\;\sim\;\begin{pmatrix} a & b & 0\\ sc & d & 0\\ s_1 & 0 & 1
\end{pmatrix}
\;\sim\; \begin{pmatrix} 1 & b & -k_1\\ 0 & d & -s_2c\\ s_1 & 0 & 1
\end{pmatrix}
\;\sim\; \begin{pmatrix} 1 & b & -k_1\\ 0 & d & -s_2c\\ 0 & -s_1b & a
\end{pmatrix}
\end{align*}
by a column move, two row moves, and one more row move. We have basically swindled $s_1$ across the diagonal, and we now go for $s_2$. Firstly, 
\begin{align*}
\begin{pmatrix} 1 & b & -k_1\\ 0 & d & -s_2c\\ 0 & -s_1b & a
\end{pmatrix}
\;\sim\;
\begin{pmatrix} 1 & 0 & s_2\\ 0 & d & -s_2c\\ 0 & -s_1b & a
\end{pmatrix}
\end{align*}
by two column moves. Next,
\begin{align*}
\begin{pmatrix} 1 & 0 & s_2\\ 0 & d & -s_2c\\ 0 & -s_1b & a
\end{pmatrix}
&\;\sim\;
\begin{pmatrix} 1 & 0 & s_2\\ c & d & 0\\ -k_2 & -s_1b & -1
\end{pmatrix}\\
&\;\sim\;
\begin{pmatrix} -a & -sb & 0\\ c & d & 0\\ -k_2 & -s_1b & -1
\end{pmatrix}
\;\sim\;
\begin{pmatrix} -a & -sb & 0\\ c & d & 0\\ 0 & 0 & -1
\end{pmatrix}
\end{align*}
by two row moves, another row move, and two column moves. Overall, we have performed $11$ moves, as claimed.

For the other choice of signs on the diagonal, one could `pivot' around $d$ instead of $a$. Alternatively, start from the above choice of signs, invert both matrices, interchange $a$ and $d$, and switch the signs of $b$ and $c$. 
\end{proof}

Applying the above lemma, we obtain
\begin{align*}
\begin{pmatrix} e+fa & fb &\\ fc & e+fd &\\ & & 1
\end{pmatrix}
\;\sim\;
\begin{pmatrix} -(e+fa) & -f^2b &\\ c & e+fd &\\ & & -1
\end{pmatrix}\;\sim\;
\begin{pmatrix} -u & \dots &\\ c & u^{-1} &\\ & & -1
\end{pmatrix}
\end{align*}
in $11+2=13$ moves. Taking the determinant reveals that the missing entry of the last matrix is $0$. One additional move, bringing the total to $14$, clears out the entry $c$. This completes the argument for $(\dagger)$, and so for Theorem~\ref{thm: F} as well.

\section{Further remarks}

\subsection{}\label{sury} The notion of bounded generation is commonly used for the property that a group is a product of finitely many cyclic subgroups. For $\SL_n(\Z)$, $n\geq 3$, bounded cyclic generation follows from bounded elementary generation. This is no longer the case over $\F[X]$. In fact, bounded cyclic generation fails for $\SL_n(\F[X])$, $n\geq 3$. The idea that bounded cyclic generation is essentially a characteristic $0$ phenomenon, is crystallized by the following result from \cite{A+}: if a linear group in positive characteristic enjoys bounded cyclic generation, then the group is virtually abelian.

\subsection{} Lemma~\ref{lem: s} can also be used over $A=\Z$. In this case, it leads to a simplification of the original Carter - Keller argument for $\SL_n(\Z)$, and to the better bound $\nu_n=\tfrac{1}{2}(3n^2-n)+25$. The improved bound is irrelevant from an asymptotic perspective, but it becomes interesting in the case $n=3$. The question, which seems to us quite appealing, is how many elementary operations are needed to reduce a matrix in $\SL_3(\Z)$ to the identity? Carter and Keller have shown that $48$ operations are sufficient. We have reduced this number to $37$. We challenge the reader to reduce this bound even further.

\subsection{} There is an interesting issue of effectiveness in using the Kornblum - Artin theorem. The usual Dirichlet theorem, used in \cite{CK}, is made effective by a result of Linnik and its modern improvements (see, for instance, \cite{HB}). Theorem~\ref{thm: KA} is also effective, since the Riemann hypothesis in the function field context is already known. 

See \cite{BS} for further instances of Dirichlet-type theorems over polynomial rings.

\bigskip
\noin\textbf{Acknowledgements.} I would like to thank Dave Witte Morris for thoughtful comments, and for pointing out Theorem~\ref{thm: KA} and reference \cite{R}. I am also grateful to B. Sury for many valuable remarks--notably, \ref{sury} herein.

\end{document}